\begin{document}

\newtheorem{thm}{Theorem}
\newtheorem{prop}[thm]{Proposition}
\newtheorem{conj}[thm]{Conjecture}
\newtheorem{ob}{Observation}

\newcommand{\trou}{\vspace{5 mm} \noindent}
\newcommand{\Trou}{\vspace{7 mm} \noindent}
\newcommand{\tour}{\vspace{4 mm}}
\newcommand{\ptrou}{\vspace{4 mm} \noindent}
\newcommand{\cbdo}{\hfill \rule{.1in}{.1in}}    
\newcommand{\prf}{\noindent{ \bf Proof.}\ }
\newcommand{\cex}{\noindent{\bf Counterexample.\ }}

\newcommand{\n}{I\!\!N}
\newcommand{\z}{{\sf Z}\!\!{\sf Z}}
\newcommand{\q}{Q\!\!\!\!I}
\newcommand {\rn}[1]{{I\!\!R}^{#1}}     
\newcommand{\ce}{C\!\!\!\!I}        
\newcommand {\cen}[1]{{C\!\!\!\!I}\,^{#1}}        
\newcommand{\pusty}{\mbox{\sc\O}}


\title{A note \\on sequences variant \\ 
of  irregularity strength for hypercubes}
\author{Anna Flaszczy\'nska, Aleksandra Gorzkowska \\and Mariusz Wo\'zniak \\
Department of Discrete Mathematics\\
AGH University}

\maketitle
\begin{abstract}
Let $f: E \mapsto \{1,2,\dots,k\}$ be an edge coloring of the $n$ - dimensional hypercube $H_n$. By the palette at a vertex $v$ we mean the sequence $\left(f(e_1(v)), f(e_1(v)),\dots, f(e_n(v))\right)$, where $e_i(v)$ is the $i$ - dimensional edge incident to $v$. In the paper, we show that two colors are enough to distinguish all vertices of the $n$ - dimensional hypercube $H_n$ ($n \geq 2$) by their palettes. We also show that if $f$ is a proper edge coloring of the hypercube $H_n$ ($n\geq 5$), then $n$ colors suffice to distinguish all vertices by their palettes.
\end{abstract}

\section{Introduction}
\par

The problem of distinguishing the vertices of a graph by
edge coloring has attracted the attention of many authors, resulting in a variety of papers being written in this area. These papers are all based on the concept of a color palette at a vertex,
which is a set (or a multiset) of colors present at a given vertex of the graph.

The first work on this subject was probably a paper by Harary
and Plantholt \cite{HaPl85} from 1985 where they consider the problem of distinguishing all vertices  by sets, and the edge coloring in question is general 
(\emph{i.e.} not necessarily proper).  
The variant of this problem concerning distinguishing vertices by 
multisets appeared in 1990 in the paper of Aigner and Triesch \cite{AiTr90a}.

The idea to consider proper edge colorings appeared independently in two places. In Ko{\v s}ice, in R. Soták's master thesis, and in Memphis, in the doctoral thesis of A. Burris. Respective publications were published with a certain delay (cf. \cite{CHS96}, \cite{BuSc97}).

The next natural variant in this area is to use a sequence of colors as the palette at a vertex. This idea comes from the paper of Seamone and Stevens \cite{SeSt13}. In this paper, the order of edges incident to a given vertex is induced by the global order of edges,
which is a given bijection of the set $E$ and the set $\{1,2, 3, \ldots, |E|\}$.
In our case, the order of the edges follows {\bf naturally} from the concept of the dimension of an edge in the hypercube.

We begin by stating the basic definitions. The \emph{$n$-dimensional hypercube} $H_n$ is a graph whose vertices are binary sequences of $n$ elements, and whose edges connect two vertices if corresponding sequences differ in exactly one element. If the differing element is the $i$th element of the sequences, we say that the edge is of \emph{dimension} $i$.

We will often use the recursive definition of a hypercube according to which $H_0=K_1$, and an $n$-dimensional hypercube is created from two $(n-1)$-dimensional hypercubes, $H_{n-1}$ and $H'_{n-1}$, by adding a matching 
between corresponding vertices in the hypercubes. This matching consists of edges of dimension $n$.

Let us denote by $E_i$ the edges of dimension $i$ of the hypercube, and for $x\in V(H_n)$ denote by $e_i(x)$ the edge of dimension $i$ incident to the vertex $x$. Note that the concept of edge dimension gives the set of edges incident to $x$ a {\bf natural sequence} structure. Let $H_n=(V,E)$ be the $n$-dimensional hypercube, let $f: E\mapsto \{1,2,\ldots, k\}$ be a coloring of its edges. Let $x$ be a vertex of the hypercube. By the \emph{palette at vertex} $x$ we mean the sequence $F(x) = (f(e_1(x)), f(e_2(x)), \ldots, f(e_n(x)))$.
We say that two vertices \emph{are distinguished by} $f$ if their palettes are different. In this paper, we find the smallest number of colors necessary to distinguish all vertices of the hypercube. Clearly, this is only possible for hypercubes of dimension $n \geq 2$.

\section{General coloring}

In this section, we consider general edge colorings. We note that the hypercube $H_n$ has $2^n$ vertices. Naturally, we need at least as many different sequences to distinguish all vertices. Using $k$ colors in the edge coloring, there are $k^n$ possible sequences. In the case of a general coloring, this leads to a trivial bound $k \geq 2$. We show that, indeed, two colors suffice to distinguish all vertices by sequences in a general coloring.

\begin{thm} \label{thm:general_2}
The smallest number of colors needed to distinguish the vertices of a hypercube $H_n$, $n \geq 2$, by general coloring is two.
\end{thm}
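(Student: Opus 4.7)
I would split the proof into a lower bound and an upper bound. The lower bound is immediate: with just one color all palettes coincide, so at least two colors are needed. For the upper bound I would construct an explicit two-coloring of $H_n$ that distinguishes all $2^n$ vertices, using colors from $\{0,1\}$.

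The starting observation is a consistency constraint. If $v$ and $w$ are the two endpoints of a dimension-$i$ edge, then $e_i(v) = e_i(w)$, so the $i$th entry of the palette at $v$ must equal the $i$th entry of the palette at $w$. Since $w$ differs from $v = (v_1,\dots,v_n)$ only in the $i$th coordinate, this means that any rule defining $f(e_i(v))$ purely in terms of the coordinates of $v$ must be \emph{independent of $v_i$}. This pins down the shape of admissible constructions.

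A natural candidate that promotes the palette map into a bijection $\{0,1\}^n \to \{0,1\}^n$ is a cyclic shift of coordinates. Concretely, I would color the dimension-$i$ edge incident to $v = (v_1,\dots,v_n)$ by $v_{i-1}$, with the convention $v_0 := v_n$. This rule is well-defined on edges because $v_{i-1}$ does not depend on $v_i$. The resulting palette is
\[
F(v) = (v_n,\, v_1,\, v_2,\, \dots,\, v_{n-1}),
\]
a cyclic permutation of the bits of $v$. Since a cyclic shift is a bijection of $\{0,1\}^n$, the $2^n$ palettes are pairwise distinct, as required.

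There is not really a substantial obstacle once the independence constraint above is noticed; after that, the cyclic-shift coloring is essentially forced, and the remaining verification is routine. I would close by remarking that the construction uses the hypothesis $n \geq 2$ in the obvious way (for $n = 1$ the hypercube has a single edge, so two distinct length-one palettes are impossible), and noting consistency with the pigeonhole lower bound $k^n \geq 2^n$ mentioned just before the theorem.
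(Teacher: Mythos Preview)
Your proof is correct and considerably cleaner than the paper's. The paper proceeds by induction on $n$: it fixes a distinguishing $2$-coloring $f$ of $H_{n-1}$, colors the two copies $H_{n-1}$ and $H_{n-1}'$ inside $H_n$ by $f$ and by $f$ with the colors swapped, and then colors the dimension-$n$ matching edges in complementary pairs so that whenever a vertex of $H_{n-1}$ and a vertex of $H_{n-1}'$ agree on the first $n-1$ palette entries they are forced to differ in the last. Your argument bypasses the induction entirely: the cyclic-shift rule $f(e_i(v)) = v_{i-1}$ (indices mod $n$) gives in one stroke a well-defined edge coloring whose palette map $v \mapsto (v_n,v_1,\dots,v_{n-1})$ is a bijection of $\{0,1\}^n$. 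What your approach buys is an explicit global description of the coloring and a one-line verification; what the paper's inductive approach buys is a template that could in principle be adapted to other recursive graph families where no such tidy closed form is available. One small quibble: the cyclic shift is not ``essentially forced'' --- any fixed-point-free permutation $\sigma$ of $\{1,\dots,n\}$ yields an equally valid coloring $f(e_i(v)) = v_{\sigma(i)}$ --- but this does not affect the correctness of your argument.
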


\begin{proof}
We use $\bar a$ to denote $0$ when $a$ is $1$ and $1$ when $a$ is $0$.
We proceed by induction on the dimension $n\geq  2$ of a hypercube $H_n$. 
If $n=2$, then the coloring that distinguishes the vertices is shown in Figure \ref{gen}.

Suppose that $n\geq 3$ and, there is a coloring $f$ of the hypercube $H_{n-1}$ with two colors ($0$ and $1$) such that all vertices are distinguished by $f$. Let us consider the hypercube $H_n$ as two copies of the $n-1$-dimensional hypercube, denoted by $H_{n-1}$ and $H_{n-1}'$, connected by a perfect matching. To create a coloring $f'$ of the edges of $H_n$, we use the coloring $f$ on the edges of $H_{n-1}$ and $H_{n-1}'$. Next, we interchange the colors $0$ and $1$ on the edges of $H_{n-1}'$. 

Let us take a vertex $v$ of the graph $H_{n-1}$, denote the first $n-1$ elements of its palette by $(a_1,a_2,\dots, a_{n-1})$. We color the edge between $v$ and its neighbor $w$ in the graph $H_{n-1}'$ with color $0$. Then, we consider the vertex from the hypercube $H_{n-1}$ with the first $n-1$ elements of the palette of the form $(\bar a_1, \bar a_2, \dots, \bar a_{n-1})$ and color the edge connecting this vertex with the corresponding vertex in $H_{n-1}'$ with color $1$. We then consider the next vertex from $H_{n-1}$ which still has an incident edge uncolored, and proceed as before until all edges are colored.

Note that the coloring $f'$ distinguishes all vertices of the hypercube $H_n$. This follows directly from the assumption about the coloring $f$ and the fact that for all pairs of vertices $u\in H_{n-1}$, $v \in H_{n-1}'$ whose first $n-1$ elements of the pallets are the same, the edge in the $n$th dimension has a different color.
\end{proof}

\begin{figure} 
\psset{unit=1cm}
\psset{radius=0.2}

\begin{pspicture}(12,3)

\put(0.5,0.5)
{\begin{pspicture}(0,0)
\dotnode(0,0){X1} 
\dotnode(2,0){X2} 
\dotnode(2,2){X3}
\dotnode(0,2){X4}
\ncline{X1}{X2}
\ncline{X2}{X3}
\ncline{X3}{X4}
\ncline{X4}{X1}

\put(-0.5,0){$00$}
\put(2.1,0){$01$}
\put(2.1,1.7){$11$}
\put(-0.5,1.7){$10$}

\rput(1,0){\circlenode[linestyle=none,fillstyle=solid,fillcolor=white]{C1}{\mbox{\small 0}}}
\rput(2,1){\circlenode[linestyle=none,fillstyle=solid,fillcolor=white]{C1}{\mbox{\small 1}}}
\rput(1,2){\circlenode[linestyle=none,fillstyle=solid,fillcolor=white]{C1}{\mbox{\small 1}}}
\rput(0,1){\circlenode[linestyle=none,fillstyle=solid,fillcolor=white]{C1}{\mbox{\small 0}}}
\end{pspicture}}

\put(6.5,0.5)
{\begin{pspicture}(0,0)
\dotnode(0,0){X1} 
\dotnode(2,0){X2} 
\dotnode(2,2){X3}
\dotnode(0,2){X4}
\ncline{X1}{X2}
\ncline{X2}{X3}
\ncline{X3}{X4}
\ncline{X4}{X1}

\rput(1,0){\circlenode[linestyle=none,fillstyle=solid,fillcolor=white]{C1}{\mbox{\small 0}}}
\rput(2,1){\circlenode[linestyle=none,fillstyle=solid,fillcolor=white]{C1}{\mbox{\small 1}}}
\rput(1,2){\circlenode[linestyle=none,fillstyle=solid,fillcolor=white]{C1}{\mbox{\small 1}}}
\rput(0,1){\circlenode[linestyle=none,fillstyle=solid,fillcolor=white]{C1}{\mbox{\small 0}}}

\dotnode(4,0){Y1} 
\dotnode(6,0){Y2} 
\dotnode(6,2){Y3}
\dotnode(4,2){Y4}
\ncline{Y1}{Y2}
\ncline{Y2}{Y3}
\ncline{Y3}{Y4}
\ncline{Y4}{Y1}

\rput(5,0){\circlenode[linestyle=none,fillstyle=solid,fillcolor=white]{C1}{\mbox{\small 1}}}
\rput(6,1){\circlenode[linestyle=none,fillstyle=solid,fillcolor=white]{C1}{\mbox{\small 0}}}
\rput(5,2){\circlenode[linestyle=none,fillstyle=solid,fillcolor=white]{C1}{\mbox{\small 0}}}
\rput(4,1){\circlenode[linestyle=none,fillstyle=solid,fillcolor=white]{C1}{\mbox{\small 1}}}

\pscurve(0,0)(2,-0.5)(4,0)
\pscurve(2,0)(4,-0.5)(6,0)
\pscurve(0,2)(2,2.5)(4,2)
\pscurve(2,2)(4,2.5)(6,2)

\rput(2,-0.5){\circlenode[linestyle=none,fillstyle=solid,fillcolor=white]{C1}{\mbox{\small 0}}}
\rput(4,-0.5){\circlenode[linestyle=none,fillstyle=solid,fillcolor=white]{C1}{\mbox{\small 0}}}
\rput(2,2.5){\circlenode[linestyle=none,fillstyle=solid,fillcolor=white]{C1}{\mbox{\small 1}}}
\rput(4,2.5){\circlenode[linestyle=none,fillstyle=solid,fillcolor=white]{C1}{\mbox{\small 1}}}

\end{pspicture}}

\end{pspicture}
\caption {Edge coloring of $H_2$ and $H_3$ distinguishing all vertices} \label{gen}
\end{figure}
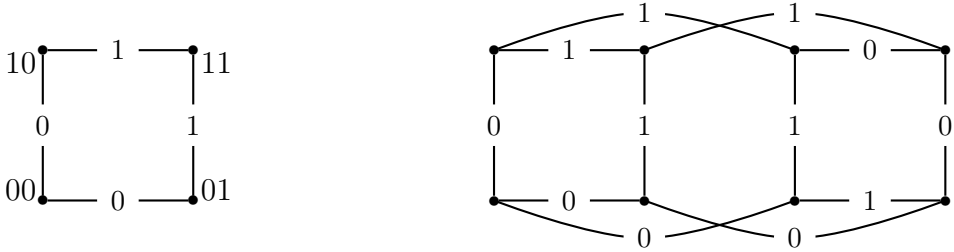

\section{Proper coloring}

In this section, we turn our attention to proper colorings. It is a well-known fact that the chromatic index of a hypercube $H_n$ is $n$. Therefore, a trivial natural lower bound for our problem is $n$ colors. We show that if the dimension of a hypercube is at least five, $n$ colors suffice. However, if $n \leq 4$ more colors are needed. We consider these cases separately in the next subsections.
 
\subsection{Cases for $n\le 4$}

\begin{prop}
The smallest number of colors needed to distinguish the vertices of hypercubes $H_2$ and $H_3$ by proper edge coloring is four.
\end {prop}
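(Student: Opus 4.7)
My plan is to handle $H_2$ and $H_3$ separately, establishing both the lower bound (three colors do not suffice) and the upper bound (an explicit distinguishing proper 4-coloring exists) in each case.

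For $H_2$, which is the 4-cycle $C_4$, I would argue algebraically. Let $a,b$ be the colors of the two dim-1 edges and $c,d$ the colors of the two dim-2 edges. Then the four palettes are $(a,c), (b,c), (a,d), (b,d)$. Properness at each vertex forces $a\neq c$, $b\neq c$, $a\neq d$, $b\neq d$, i.e.\ $\{a,b\}\cap\{c,d\}=\emptyset$. For the four palettes to be pairwise distinct I additionally need $a\neq b$ (from $(a,c)\neq(b,c)$) and $c\neq d$ (from $(a,c)\neq(a,d)$). Altogether $|\{a,b,c,d\}|\geq 4$, giving the lower bound, and the assignment $a=1, b=2, c=3, d=4$ trivially realises it.

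For $H_3$, the lower bound is a neat pigeonhole argument: $H_3$ is $3$-regular, so in any proper $3$-edge-coloring the three edges at each vertex necessarily use all three colors; consequently every palette is a permutation of $(1,2,3)$. There are only $3!=6$ such permutations, but $H_3$ has $2^3=8$ vertices, so two must collide. Hence at least $4$ colors are required.

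The delicate part is the upper bound for $H_3$: exhibiting a proper $4$-coloring whose $8$ palettes are all distinct. I would use the recursive description and view $H_3$ as two copies of $H_2$ joined by a dim-$3$ matching. Colour the first copy by the distinguishing 4-colouring of $H_2$ obtained above (dim-$1$ edges using $\{1,2\}$, dim-$2$ edges using $\{3,4\}$), and colour the second copy by a carefully permuted version so that (i) the partial palettes $(c_1,c_2)$ are globally distinct on the two slices, and (ii) for every dim-$3$ edge $v0v1$ the multiset $\{c_1(v0),c_2(v0),c_1(v1),c_2(v1)\}$ omits at least one colour, leaving a legal choice for the dim-$3$ edge. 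The obstacle in constructing this is exactly avoiding the bad situation where these four transverse colours exhaust $\{1,2,3,4\}$, which forbids every colour on the crossing edge; a concrete legitimate shift, e.g.\ taking the second slice to use $b_1=2,\ b_2=3,\ b_3=1,\ b_4=4$ on the analogous edges, meets both requirements. Finally I would tabulate the eight resulting palettes and verify that they are pairwise distinct and that the coloring is proper, which is a mechanical check.
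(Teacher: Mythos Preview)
Your argument is correct and follows the same route as the paper, only with considerably more detail: the paper dismisses $H_2$ as ``easy to verify'' and handles $H_3$ by the same pigeonhole count ($8>3!$) together with an explicit figure for the upper bound, whereas you spell out the $H_2$ algebra and build the $H_3$ coloring by hand.

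One small inaccuracy worth fixing: your concrete shift $b_1=2$, $b_2=3$, $b_3=1$, $b_4=4$ does \emph{not} satisfy your condition~(i) as stated. The second slice then has partial palettes $(2,1),(2,4),(3,1),(3,4)$, and $(2,4)$ already occurs in the first slice (at vertex $110$), so the eight partial palettes are not globally distinct. Fortunately this does not break the construction: the dimension-$3$ edges are each forced to a unique legal color (namely $4,3,4,1$ on $000\text{--}001$, $100\text{--}101$, $010\text{--}011$, $110\text{--}111$), and the resulting full palettes $(2,4,1)$ and $(2,4,3)$ at the two offending vertices are distinct. So the tabulation you propose at the end does go through, but you should either drop the claim that the shift meets~(i), or weaken~(i) to ``distinct within each slice'' and rely on the dimension-$3$ coordinate to separate the slices.
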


\begin{proof}
It is easy to verify that the proposition holds in the case of the hypercube $H_2$. For the hypercube $H_3$, at least four colors are needed. It follows from the fact that the number of vertices in $H_3$ is greater than the number of permutations of $3$ elements. Figure \ref{H3} presents the distinguishing coloring of $H_3$ with four colors.
\end{proof}

\begin{figure}[H]
\psset{unit=0.5cm}
\psset{radius=0.2}

\begin{center}

\begin{pspicture}(6,7)

\put(0.5,0.5)
{\begin{pspicture}(0,0)
\dotnode(0,0){X1} 
\dotnode(5,0){X2} 
\dotnode(5,5){X3}
\dotnode(0,5){X4}
\ncline{X1}{X2}
\ncline{X2}{X3}
\ncline{X3}{X4}
\ncline{X4}{X1}

\dotnode(1.5,1.5){Y1} 
\dotnode(3.5,1.5){Y2} 
\dotnode(3.5,3.5){Y3}
\dotnode(1.5,3.5){Y4}
\ncline{Y1}{Y2}
\ncline{Y2}{Y3}
\ncline{Y3}{Y4}
\ncline{Y4}{Y1}

\ncline{X1}{Y1}
\ncline{X2}{Y2}
\ncline{X3}{Y3}
\ncline{X4}{Y4}

\rput(2.5,0){\circlenode[linestyle=none,fillstyle=solid,fillcolor=white]{C1}{\mbox{\small 1}}}
\rput(2.5,5){\circlenode[linestyle=none,fillstyle=solid,fillcolor=white]{C1}{\mbox{\small 1}}}
\rput(0.75,0.75){\circlenode[linestyle=none,fillstyle=solid,fillcolor=white]{C1}{\mbox{\small 4}}}
\rput(4.25,0.75){\circlenode[linestyle=none,fillstyle=solid,fillcolor=white]{C1}{\mbox{\small 4}}}
\rput(5,2.5){\circlenode[linestyle=none,fillstyle=solid,fillcolor=white]{C1}{\mbox{\small 2}}}
\rput(0,2.5){\circlenode[linestyle=none,fillstyle=solid,fillcolor=white]{C1}{\mbox{\small 3}}}
\rput(1.5,2.5){\circlenode[linestyle=none,fillstyle=solid,fillcolor=white]{C1}{\mbox{\small 1}}}

\rput(2.5,1.5){\circlenode[linestyle=none,fillstyle=solid,fillcolor=white]{C1}{\mbox{\small 3}}}
\rput(0.75,4.25){\circlenode[linestyle=none,fillstyle=solid,fillcolor=white]{C1}{\mbox{\small 2}}}
\rput(4.25,4.25){\circlenode[linestyle=none,fillstyle=solid,fillcolor=white]{C1}{\mbox{\small 3}}}
\rput(2.5,3.5){\circlenode[linestyle=none,fillstyle=solid,fillcolor=white]{C1}{\mbox{\small 4}}}
\rput(3.5,2.5){\circlenode[linestyle=none,fillstyle=solid,fillcolor=white]{C1}{\mbox{\small 2}}}

\end{pspicture}}

\end{pspicture}
\end{center}

\caption {Proper edge coloring of $H_3$ distinguishing all vertices} 
\label{H3}
\end{figure}
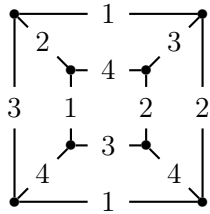

\begin{prop}
Four colors are not enough to distinguish the vertices of a hypercube $H_4$ by proper edge coloring.
\end{prop}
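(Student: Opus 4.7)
I argue by contradiction. Assume $f$ is a proper $4$-edge-coloring of $H_4$ distinguishing all $16$ vertices. Since $H_4$ is $4$-regular, every palette $F(v)$ is a permutation of $\{1,2,3,4\}$, so $f$ induces an injection $V(H_4)\to S_4$. The bare count $16<24=|S_4|$ is not enough, so I will exploit the hypercube structure via a local ``$2$-face'' obstruction.

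The key local ingredient concerns any two-dimensional face of $H_4$, that is, a $4$-cycle obtained by choosing two dimensions $i,j$ and fixing the other two coordinates. Such a face has two parallel edges of dimension $i$ and two of dimension $j$. If both parallel pairs are monochromatic (colour $a$ on the dim-$i$ pair, colour $b$ on the dim-$j$ pair), then every vertex on this face has $a$ in palette position $i$ and $b$ in position $j$, so its remaining two palette entries form a permutation of the two colours in $\{1,2,3,4\}\setminus\{a,b\}$. Only two such permutations exist, but the face has four vertices, so two of them would share a palette, contradicting distinguishing. Hence in every $2$-face at least one pair of parallel edges is bichromatic.

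To globalise, introduce $x_{c,i}:=|\{v:f(e_i(v))=c\}|$, which equals twice the number of dim-$i$ edges of colour $c$ (hence is even), is at most $6$ (the number of permutations of $\{1,2,3,4\}$ with a prescribed entry in a prescribed position), and satisfies $\sum_c x_{c,i}=\sum_i x_{c,i}=16$. These constraints restrict the multiset of entries in each row and each column of the $4\times 4$ matrix $(x_{c,i})$ to one of $\{4,4,4,4\}$, $\{6,4,4,2\}$, $\{6,6,2,2\}$ and $\{6,6,4,0\}$. Whenever some $x_{c,i}=6$ occurs, all six permutations with $c$ in position $i$ must appear as palettes --- a highly constrained configuration that I expect to combine with the $2$-face obstruction to yield a forbidden aligned face, eliminating the non-balanced multisets.

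The main obstacle is the balanced case $x_{c,i}\equiv 4$, in which the $2$-face obstruction is not locally violated. Here I would supplement it with the observation, proved exactly as in the preceding proposition, that the palettes restricted to any $3$-dimensional subcube $H_3\subset H_4$ yield $8$ distinct triples --- indeed, if two such restricted palettes coincided, their fourth (missing) colour entries would coincide too, contradicting distinguishing. Using this injectivity on each of the two halves $\{v_4=0\}$ and $\{v_4=1\}$, together with the fact that each colour is used on exactly two dim-$4$ edges in the balanced case, and the $2$-face obstruction, should after a finite case analysis on the dim-$4$ colour patterns produce a contradiction.
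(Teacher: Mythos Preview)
Your proposal is not a proof but a plan, and the plan stops precisely where the real difficulty begins. The $2$-face obstruction is correct and is a pleasant observation (the paper does not isolate it in this form), and your matrix $(x_{c,i})$ with row and column sums $16$, even entries, and entries at most $6$ is a reasonable bookkeeping device. But from that point on you offer only expectations: for the non-balanced rows/columns you say you ``expect'' the $2$-face obstruction to produce a forbidden face, and for the balanced case $x_{c,i}\equiv 4$ you say a finite case analysis ``should'' yield a contradiction. Neither claim is argued. In particular, I do not see how the $2$-face obstruction alone rules out $x_{c,i}=6$: having three dim-$i$ edges of colour $c$ does not by itself force any $2$-face to have both parallel pairs monochromatic. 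The paper eliminates this case by a parity/matching argument (the colour-$c$ edges form a perfect matching of $H_4$, so one cannot have exactly three of them in one $H_3$-half), an idea absent from your sketch.

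More seriously, the balanced case $x_{c,i}\equiv 4$ is exactly the configuration the paper is left with after its preliminary reductions, and the bulk of its proof is a genuine case analysis there (Cases~1--3, split by the distance between the two same-coloured edges in a given dimension, with a further subcase split). Your ``should after a finite case analysis'' is a placeholder for this entire argument. The additional tool you mention --- that the restriction to any $H_3$ subcube already yields eight distinct triples --- is a true and trivial consequence of injectivity into $S_4$, but it is not obviously strong enough to replace the detailed analysis; you would still need to pin down how the two monochromatic dim-$4$ pairs sit inside $H_3$, and that is exactly the case split the paper performs. As written, the proposal establishes nothing beyond the setup.
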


\begin{proof} 
We will consider the hypercube $H_4$ as two three-dimensional hypercubes, $H_3$ and $H'_3$, with a matching connecting the corresponding vertices. The edges of this matching are of the 4th dimension. The vertices of the hypercube  $H_3$ are denoted by $x_i$ and $y_i$, and the vertices of the  hypercube $H'_3$ 
are denoted by $x'_i$ and $y'_i$ (see Figure \ref{prop4_case1A1}).
We follow the convention that the horizontal edges in both hypercubes are of dimension $1$, the vertical edges of dimension $2$, and the diagonal edges of dimension $3$.

Suppose that there is a proper coloring $f$ of the edges of a hypercube $H_4$ with four colors, such that all vertices are distinguished by sequences $F(x)$. Note that in such a coloring, every vertex has all four colors in its palette.

Let us first assume that at least four edges of some dimension have the same color. Therefore, to distinguish the eight vertices that are the ends of these four edges, we have only three palette coordinates at each vertex and three colors. Therefore, we have six possibilities for a palette and eight vertices to distinguish, which is impossible. Hence, at most three edges of one dimension have the same color.

Now we assume that exactly three edges of some dimension, say the 4th, have the same color, say 1. Since every vertex has all four colors in its palette, the edges in color $1$ form a perfect matching in $H_4$. However, there are eight vertices in $H_3$, and for exactly three of them the edge of that matching is in the 4th dimension. Therefore, it is impossible to complete the perfect matching in $H_3$ with five vertices, a contradiction.

Suppose that there is a color $k$ and a dimension $i$ such that exactly one edge of the $i$th dimension has color $k$. Since there are eight edges of dimension $i$, there are at least three edges of dimension $i$ with a color different from $k$, a contradiction. Therefore, we assume that for each dimension, there are exactly two edges of each color.
We will consider cases depending on the distance of edges having the same color. We say that two edges of the same dimension $e=xy$ and $e'=x'y'$ are at a distance $d$ if
$\min \{{\rm dist}(x,x'), {\rm dist}(x,y')\}=d$.

In the proof, we will use three rules:
\begin{description}
	\item[R1] coloring $f$ is proper,
	\item[R2] there are no three edges of the same color in one dimension, 
    \item[R3] all vertices are distinguished by sequences.
\end{description}

$\newline$
\noindent{\textbf{Case 1}} There are two edges of the same dimension and the same color at distance $1$.

\trou

Without loss of generality, we assume that $f(x_1x_2)=f(x_3x_4)=1$ and next that $f(x_2x_3)=2$, by R1.
Applying rules R1 and R3, we color edges $x_2y_2$ and $x_3y_3$ without loss of generality with $3$ and $4$ colors, respectively. It is easy to see that
the edge $x_1x_4$ cannot be colored with colors neither $1$ nor $2$, therefore due to symmetry, we assume that $f(x_1x_4)=3$. Then by R1 edge $x_1 y_1$ can be colored only with color $2$ or $4$. We consider the cases separately.

\trou
\noindent{\textbf{Subcase 1A}} $f(x_1y_1)=2$. 

The edge $y_1 y_2$ has color $4$, because of $R1$ and $R2$. Then the edge $x_4y_4$ has color $4$ (R1 and R3). Applying R1 and R2, we can only use colors $2$ or $3$ to color the edge $y_3y_4$, assume that $f(y_3y_4)=2$ (the other possibility is symmetric). Next $f(y_2y_3)=1$, by R1 and $f(y_1y_4)=3$, by R1 and R3. Due to R1 we obtain that $f(x_3x'_3)=3$, $f(x_4 x'_4)=2$, $f(x_1x'_1)=4$ and $f(y_2y'_2)=2$. From rules R1 and R2 results that $f(x'_3x'_4)=4$ and $f(x'_1x'_4)=1$. Edge $x'_4 y'_4$ has color 3, by R1. The vertices $y_2$ and $x'_4$ received the same sequences, a contradiction.

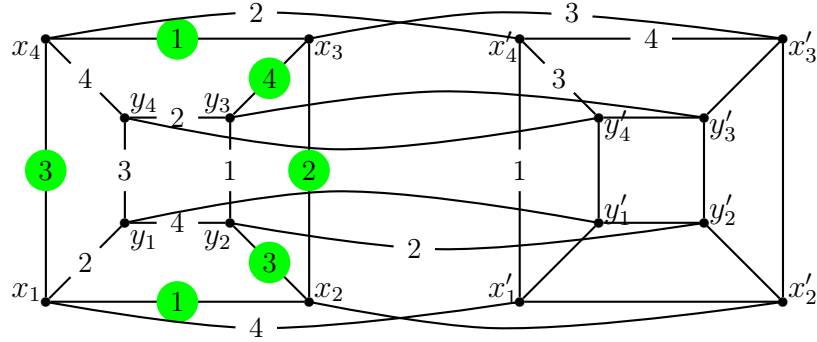
\begin{figure}[H]
\psset{unit=0.7cm}
\psset{radius=0.2}

\begin{center}
\begin{pspicture}(13,6)

\put(0.5,0.5)
{\begin{pspicture}(0,0)
\dotnode(0,0){X1} 
\dotnode(5,0){X2} 
\dotnode(5,5){X3}
\dotnode(0,5){X4}
\ncline{X1}{X2}
\ncline{X2}{X3}
\ncline{X3}{X4}
\ncline{X4}{X1}

\dotnode(1.5,1.5){Y1} 
\dotnode(3.5,1.5){Y2} 
\dotnode(3.5,3.5){Y3}
\dotnode(1.5,3.5){Y4}
\ncline{Y1}{Y2}
\ncline{Y2}{Y3}
\ncline{Y3}{Y4}
\ncline{Y4}{Y1}

\ncline{X1}{Y1}
\ncline{X2}{Y2}
\ncline{X3}{Y3}
\ncline{X4}{Y4}

\put(-0.65,0.1){$x_1$}
\put(5.1,0.1){$x_2$}
\put(-0.65,4.7){$x_4$}
\put(5.1,4.7){$x_3$}

\put(1.6,1.1){$y_1$}
\put(3,1.1){$y_2$}
\put(3,3.7){$y_3$}
\put(1.6,3.7){$y_4$}

\rput(2.5,0){\circlenode[linestyle=none,fillstyle=solid,fillcolor=green]{C1}{\mbox{\small 1}}}
\rput(2.5,5){\circlenode[linestyle=none,fillstyle=solid,fillcolor=green]{C1}{\mbox{\small 1}}}
\rput(0.75,0.75){\circlenode[linestyle=none,fillstyle=solid,fillcolor=white]{C1}{\mbox{\small 2}}}
\rput(4.25,0.75){\circlenode[linestyle=none,fillstyle=solid,fillcolor=green]{C1}{\mbox{\small 3}}}
\rput(5,2.5){\circlenode[linestyle=none,fillstyle=solid,fillcolor=green]{C1}{\mbox{\small 2}}}
\rput(0,2.5){\circlenode[linestyle=none,fillstyle=solid,fillcolor=green]{C1}{\mbox{\small 3}}}
\rput(1.5,2.5){\circlenode[linestyle=none,fillstyle=solid,fillcolor=white]{C1}{\mbox{\small 3}}}

\rput(2.5,1.5){\circlenode[linestyle=none,fillstyle=solid,fillcolor=white]{C1}{\mbox{\small 4}}}
\rput(0.75,4.25){\circlenode[linestyle=none,fillstyle=solid,fillcolor=white]{C1}{\mbox{\small 4}}}
\rput(4.25,4.25){\circlenode[linestyle=none,fillstyle=solid,fillcolor=green]{C1}{\mbox{\small 4}}}
\rput(2.5,3.5){\circlenode[linestyle=none,fillstyle=solid,fillcolor=white]{C1}{\mbox{\small 2}}}
\rput(3.5,2.5){\circlenode[linestyle=none,fillstyle=solid,fillcolor=white]{C1}{\mbox{\small 1}}}

\end{pspicture}}

\put(9.5,0.5)
{\begin{pspicture}(0,0)
\dotnode(0,0){X1} 
\dotnode(5,0){X2} 
\dotnode(5,5){X3}
\dotnode(0,5){X4}
\ncline{X1}{X2}
\ncline{X2}{X3}
\ncline{X3}{X4}
\ncline{X4}{X1}

\dotnode(1.5,1.5){Y1} 
\dotnode(3.5,1.5){Y2} 
\dotnode(3.5,3.5){Y3}
\dotnode(1.5,3.5){Y4}
\ncline{Y1}{Y2}
\ncline{Y2}{Y3}
\ncline{Y3}{Y4}
\ncline{Y4}{Y1}

\ncline{X1}{Y1}
\ncline{X2}{Y2}
\ncline{X3}{Y3}
\ncline{X4}{Y4}

\put(-0.6,0.1){$x'_1$}
\put(5.1,0.1){$x'_2$}
\put(-0.6 ,4.7){$x'_4$}
\put(5.1,4.7){$x'_3$}

\put(1.6,1.6){$y'_1$}
\put(3.6,1.6){$y'_2$}
\put(3.6,3.2){$y'_3$}
\put(1.6,3.2){$y'_4$}

\rput(0,2.5){\circlenode[linestyle=none,fillstyle=solid,fillcolor=white]{C1}{\mbox{\small 1}}}
\rput(0.75,4.25){\circlenode[linestyle=none,fillstyle=solid,fillcolor=white]{C1}{\mbox{\small 3}}}
\rput(2.5,5){\circlenode[linestyle=none,fillstyle=solid,fillcolor=white]{C1}{\mbox{\small 4}}}

\end{pspicture}}

\pscurve(0.5,5.5)(4.5,6)(9.5,5.5)
\pscurve(5.5,5.5)(9,6)(14.5,5.5)
\pscurve(0.5,0.5)(4.5,0)(9.5,0.5)
\pscurve(5.5,0.5)(9,0)(14.5,0.5)

\pscurve(2,4)(6,3.4)(11,4)
\pscurve(4,4)(8,4.5)(13,4)
\pscurve(2,2)(6,2.6)(11,2)
\pscurve(4,2)(8,1.5)(13,2)

\rput(4.5,0){\circlenode[linestyle=none,fillstyle=solid,fillcolor=white]{C1}{\mbox{\small 4}}}
\rput(4.5,6){\circlenode[linestyle=none,fillstyle=solid,fillcolor=white]{C1}{\mbox{\small 2}}}
\rput(10.5,6){\circlenode[linestyle=none,fillstyle=solid,fillcolor=white]{C1}{\mbox{\small 3}}}
\rput(7.5,1.5){\circlenode[linestyle=none,fillstyle=solid,fillcolor=white]{C1}{\mbox{\small 2}}}

\end{pspicture}
\end{center}
\caption {Subcase 1A}   
\label{prop4_case1A1}
\end{figure}

\trou
\noindent{\textbf{Subcase 1B}} $f(x_1y_1)=4$. 
\newline
Due to R1 and R2, the edge $y_1 y_2$ has color $2$. Applying R1, we get that $f(y_2 y_3)=1$. Next $f(y_2y'_2)=4$ and $f(x_2x'_2)=4$, because of R1. Hence, $f(y_4 y'_4) \ne 4$, due to R2. Moreover, edges $y_3 y_4$ and $y_1 y_4$ cannot be colored with color $4$, because of R1. Hence, $f(x_4y_4)=4$, which contradicts R2.

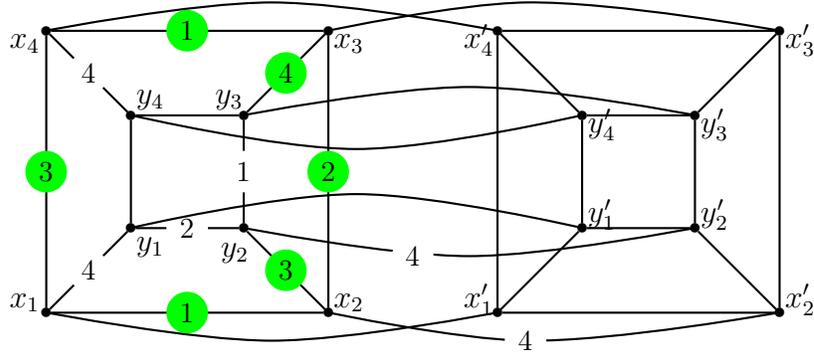
\begin{figure}
\psset{unit=0.75cm}
\psset{radius=0.2}

\begin{center}

\begin{pspicture}(13,6)

\put(0.5,0.5)
{\begin{pspicture}(0,0)
\dotnode(0,0){X1} 
\dotnode(5,0){X2} 
\dotnode(5,5){X3}
\dotnode(0,5){X4}
\ncline{X1}{X2}
\ncline{X2}{X3}
\ncline{X3}{X4}
\ncline{X4}{X1}

\dotnode(1.5,1.5){Y1} 
\dotnode(3.5,1.5){Y2} 
\dotnode(3.5,3.5){Y3}
\dotnode(1.5,3.5){Y4}
\ncline{Y1}{Y2}
\ncline{Y2}{Y3}
\ncline{Y3}{Y4}
\ncline{Y4}{Y1}

\ncline{X1}{Y1}
\ncline{X2}{Y2}
\ncline{X3}{Y3}
\ncline{X4}{Y4}

\put(-0.65,0.1){$x_1$}
\put(5.1,0.1){$x_2$}
\put(-0.65,4.7){$x_4$}
\put(5.1,4.7){$x_3$}

\put(1.6,1.1){$y_1$}
\put(3.1,1){$y_2$}
\put(3,3.7){$y_3$}
\put(1.6,3.7){$y_4$}

\rput(2.5,5){\circlenode[linestyle=none,fillstyle=solid,fillcolor=green]{C1}{\mbox{\small 1}}}
\rput(0.75,0.75){\circlenode[linestyle=none,fillstyle=solid,fillcolor=white]{C1}{\mbox{\small 4}}}
\rput(4.25,0.75){\circlenode[linestyle=none,fillstyle=solid,fillcolor=green]{C1}{\mbox{\small 3}}}
\rput(5,2.5){\circlenode[linestyle=none,fillstyle=solid,fillcolor=green]{C1}{\mbox{\small 2}}}
\rput(0,2.5){\circlenode[linestyle=none,fillstyle=solid,fillcolor=green]{C1}{\mbox{\small 3}}}

\rput(2.5,1.5){\circlenode[linestyle=none,fillstyle=solid,fillcolor=white]{C1}{\mbox{\small 2}}}
\rput(0.75,4.25){\circlenode[linestyle=none,fillstyle=solid,fillcolor=white]{C1}{\mbox{\small 4}}}
\rput(4.25,4.25){\circlenode[linestyle=none,fillstyle=solid,fillcolor=green]{C1}{\mbox{\small 4}}}
\rput(2.5,0){\circlenode[linestyle=none,fillstyle=solid,fillcolor=green]{C1}{\mbox{\small 1}}}
\rput(3.5,2.5){\circlenode[linestyle=none,fillstyle=solid,fillcolor=white]{C1}{\mbox{\small 1}}}

\end{pspicture}}

\put(8.5,0.5)
{\begin{pspicture}(0,0)
\dotnode(0,0){X1} 
\dotnode(5,0){X2} 
\dotnode(5,5){X3}
\dotnode(0,5){X4}
\ncline{X1}{X2}
\ncline{X2}{X3}
\ncline{X3}{X4}
\ncline{X4}{X1}

\dotnode(1.5,1.5){Y1} 
\dotnode(3.5,1.5){Y2} 
\dotnode(3.5,3.5){Y3}
\dotnode(1.5,3.5){Y4}
\ncline{Y1}{Y2}
\ncline{Y2}{Y3}
\ncline{Y3}{Y4}
\ncline{Y4}{Y1}

\ncline{X1}{Y1}
\ncline{X2}{Y2}
\ncline{X3}{Y3}
\ncline{X4}{Y4}

\put(-0.6,0.1){$x'_1$}
\put(5.1,0.1){$x'_2$}
\put(-0.6,4.7){$x'_4$}
\put(5.1,4.7){$x'_3$}

\put(1.6,1.6){$y'_1$}
\put(3.6,1.6){$y'_2$}
\put(3.6,3.2){$y'_3$}
\put(1.6,3.2){$y'_4$}

\end{pspicture}}

\pscurve(0.5,5.5)(4.5,6)(8.5,5.5)
\pscurve(5.5,5.5)(9,6)(13.5,5.5)
\pscurve(0.5,0.5)(4.5,0)(8.5,0.5)
\pscurve(5.5,0.5)(9,0)(13.5,0.5)

\pscurve(2,4)(6,3.4)(10,4)
\pscurve(4,4)(8,4.5)(12,4)
\pscurve(2,2)(6,2.6)(10,2)
\pscurve(4,2)(8,1.5)(12,2)

\rput(9,0){\circlenode[linestyle=none,fillstyle=solid,fillcolor=white]{C1}{\mbox{\small 4}}}
\rput(7,1.5){\circlenode[linestyle=none,fillstyle=solid,fillcolor=white]{C1}{\mbox{\small 4}}}

\end{pspicture}
\end{center}

\caption {Subcase 1B} 
\label{prop4_case1A2}
\end{figure}

$\newline$
\noindent{\textbf{Case 2}} There are two edges of the same dimension and the same color at distance two.

Without loss of generality, we assume that $f(x_1 x_2)=f(y_3 y_4)=1$. Moreover, we assume that $f(x_3 y_3)=3$ and $f(y_2 y_3)=2$. Then, because of R1 and Case 1, the edge $x_2 y_2$ is colored $4$. However, due to R1 and Case 1, the edge $x_2 x_3$ cannot be colored with any of the four colors, a contradiction. 

$\newline$
\noindent{\textbf{Case 3}} There are two edges of the same dimension and the same color at distance 3.

From previous cases, for each dimension, there are four pairs of edges at distance 3 with the same color. Without loss of generality, we assume that $f(x_1 x_2)=f(y'_3 y'_4)=1$. Additionally, we assume that $f(x_2 x_3)=f(y'_1y'_4)=2$. The color of the edges $x_2y_2$ and $x'_4y'_4$ is the same in $f$. Hence, the vertices $x_2$ and $y'_4$ receive the same sequence, a contradiction.
\end{proof}

\begin{ob}
Five colors are enough to distinguish the vertices of a hypercube $H_4$ by proper edge coloring.
\end{ob}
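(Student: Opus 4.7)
The proof will proceed by explicit construction: I plan to exhibit a proper $5$-edge-coloring of $H_4$ and verify, by direct inspection of the resulting $16$ palettes, that they are pairwise distinct. Since each palette is a sequence of four pairwise distinct colors drawn from a pool of five, there are $5\cdot 4\cdot 3\cdot 2=120$ possible palettes, so purely counting-wise there is ample room for $16$ distinct ones.

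To organize the construction I will use the recursive decomposition $H_4=H_3^{(0)}\cup H_3^{(1)}\cup M$, where $H_3^{(0)}$ and $H_3^{(1)}$ are the two three-dimensional subcubes obtained by fixing the $4$-th coordinate, and $M$ is the perfect matching of dimension-$4$ edges. On $H_3^{(0)}$ I will use the distinguishing proper $4$-edge-coloring displayed in Figure~\ref{H3}. On $H_3^{(1)}$ I will use a second proper edge-coloring obtained from the first by a small modification, most cleanly by applying a permutation of the four colors combined with a localized use of the new color~$5$. The key requirement on this modification is that, for every corresponding pair $(v,v')\in M$, the restricted $3$-palettes on positions $1,2,3$ at $v$ and at $v'$ must already be different: otherwise their full $4$-palettes would be forced to agree, since both contain the same color on the common dimension-$4$ edge. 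The edges of $M$ are then colored one at a time, with color~$5$ kept as a ``joker'' to avoid the (at most six) colors already present on the other incident edges at the two endpoints; a valid choice always exists when the union of the two local $3$-palettes has size at most four.

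The hard part is the coordination between the two $H_3$ colorings. I need to arrange simultaneously that (i) the coloring of each $H_3^{(b)}$ is proper and internally distinguishing, (ii) every corresponding pair $(v,v')\in M$ has differing restricted $3$-palettes, (iii) the combined local palette at $\{v,v'\}$ leaves at least one admissible color for the edge $vv'$, and (iv) no two of the resulting $16$ full palettes coincide, even for non-corresponding pairs from opposite copies. Naive strategies can fail at (iii) or (iv): for instance, simply relabeling colors by a cyclic shift on $H_3^{(1)}$ can produce corresponding pairs whose combined palette exhausts $\{1,\dots,5\}$, leaving no legal dimension-$4$ color. Once a suitable pair of colorings is fixed, presented concretely in a figure analogous to Figure~\ref{H3}, the verification reduces to a finite case-check over the $16$ vertices, which I would carry out by tabulating the palettes and confirming pairwise distinctness.
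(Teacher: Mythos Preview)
Your plan is sound in outline---an explicit coloring followed by a finite check of the sixteen palettes is exactly how the paper handles this observation---but you are making the construction harder than it needs to be. The paper's coloring (Figure~\ref{H_4}) simply assigns color~$5$ to \emph{every} edge of dimension~$4$ and uses only colors $\{1,2,3,4\}$ inside each copy of $H_3$. With that choice, your condition~(iii) is vacuous (color~$5$ is always free for the matching edge), condition~(ii) collapses into condition~(iv), and the whole task reduces to finding two proper $4$-edge-colorings of $H_3$ whose sixteen restricted $3$-palettes are pairwise distinct---comfortably possible since there are $4\cdot3\cdot2=24$ candidate palettes. In the paper's figure the second $H_3$-coloring is in fact just a cyclic relabeling $c\mapsto c-1\pmod 4$ of the first.

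By contrast, your plan to sprinkle color~$5$ into $H_3^{(1)}$ and then treat the matching edges one at a time with~$5$ as a joker is precisely what creates the obstruction you flag at~(iii): once color~$5$ appears among the six edges incident to some pair $(v,v')$, the union of the two $3$-palettes can exhaust $\{1,\dots,5\}$ and leave no legal color for $vv'$. You recognize this but do not resolve it; the paper's choice avoids the issue entirely. So: keep your verification strategy, but replace the elaborate coordination scheme with the uniform assignment $f(e)=5$ for all $e\in M$, and the remaining work becomes a short table.
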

\begin{proof}
The observation results from Figure \ref{H_4}.
\end{proof}

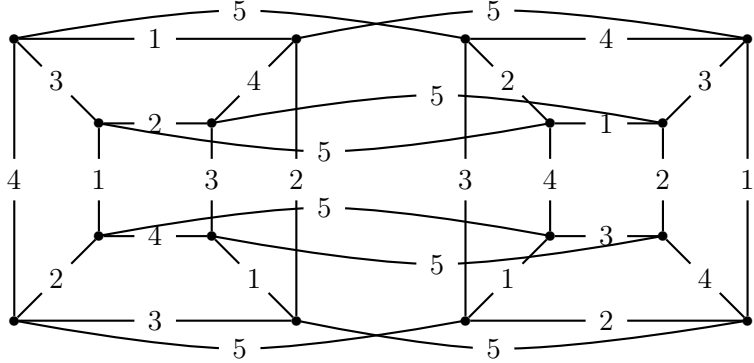
\begin{figure}
\psset{unit=0.75cm}
\psset{radius=0.2}

\begin{center}

\begin{pspicture}(13,6)

\put(0.5,0.5)
{\begin{pspicture}(0,0)
\dotnode(0,0){X1} 
\dotnode(5,0){X2} 
\dotnode(5,5){X3}
\dotnode(0,5){X4}
\ncline{X1}{X2}
\ncline{X2}{X3}
\ncline{X3}{X4}
\ncline{X4}{X1}

\dotnode(1.5,1.5){Y1} 
\dotnode(3.5,1.5){Y2} 
\dotnode(3.5,3.5){Y3}
\dotnode(1.5,3.5){Y4}
\ncline{Y1}{Y2}
\ncline{Y2}{Y3}
\ncline{Y3}{Y4}
\ncline{Y4}{Y1}

\ncline{X1}{Y1}
\ncline{X2}{Y2}
\ncline{X3}{Y3}
\ncline{X4}{Y4}

\rput(2.5,0){\circlenode[linestyle=none,fillstyle=solid,fillcolor=white]{C1}{\mbox{\small 3}}}
\rput(2.5,5){\circlenode[linestyle=none,fillstyle=solid,fillcolor=white]{C1}{\mbox{\small 1}}}
\rput(0.75,0.75){\circlenode[linestyle=none,fillstyle=solid,fillcolor=white]{C1}{\mbox{\small 2}}}
\rput(4.25,0.75){\circlenode[linestyle=none,fillstyle=solid,fillcolor=white]{C1}{\mbox{\small 1}}}
\rput(5,2.5){\circlenode[linestyle=none,fillstyle=solid,fillcolor=white]{C1}{\mbox{\small 2}}}
\rput(0,2.5){\circlenode[linestyle=none,fillstyle=solid,fillcolor=white]{C1}{\mbox{\small 4}}}
\rput(1.5,2.5){\circlenode[linestyle=none,fillstyle=solid,fillcolor=white]{C1}{\mbox{\small 1}}}

\rput(2.5,1.5){\circlenode[linestyle=none,fillstyle=solid,fillcolor=white]{C1}{\mbox{\small 4}}}
\rput(0.75,4.25){\circlenode[linestyle=none,fillstyle=solid,fillcolor=white]{C1}{\mbox{\small 3}}}
\rput(4.25,4.25){\circlenode[linestyle=none,fillstyle=solid,fillcolor=white]{C1}{\mbox{\small 4}}}
\rput(2.5,3.5){\circlenode[linestyle=none,fillstyle=solid,fillcolor=white]{C1}{\mbox{\small 2}}}
\rput(3.5,2.5){\circlenode[linestyle=none,fillstyle=solid,fillcolor=white]{C1}{\mbox{\small 3}}}

\end{pspicture}}

\put(8.5,0.5)
{\begin{pspicture}(0,0)
\dotnode(0,0){X1} 
\dotnode(5,0){X2} 
\dotnode(5,5){X3}
\dotnode(0,5){X4}
\ncline{X1}{X2}
\ncline{X2}{X3}
\ncline{X3}{X4}
\ncline{X4}{X1}

\dotnode(1.5,1.5){Y1} 
\dotnode(3.5,1.5){Y2} 
\dotnode(3.5,3.5){Y3}
\dotnode(1.5,3.5){Y4}
\ncline{Y1}{Y2}
\ncline{Y2}{Y3}
\ncline{Y3}{Y4}
\ncline{Y4}{Y1}

\ncline{X1}{Y1}
\ncline{X2}{Y2}
\ncline{X3}{Y3}
\ncline{X4}{Y4}

\rput(2.5,0){\circlenode[linestyle=none,fillstyle=solid,fillcolor=white]{C1}{\mbox{\small 2}}}
\rput(2.5,5){\circlenode[linestyle=none,fillstyle=solid,fillcolor=white]{C1}{\mbox{\small 4}}}
\rput(0.75,0.75){\circlenode[linestyle=none,fillstyle=solid,fillcolor=white]{C1}{\mbox{\small 1}}}
\rput(4.25,0.75){\circlenode[linestyle=none,fillstyle=solid,fillcolor=white]{C1}{\mbox{\small 4}}}
\rput(5,2.5){\circlenode[linestyle=none,fillstyle=solid,fillcolor=white]{C1}{\mbox{\small 1}}}
\rput(0,2.5){\circlenode[linestyle=none,fillstyle=solid,fillcolor=white]{C1}{\mbox{\small 3}}}
\rput(1.5,2.5){\circlenode[linestyle=none,fillstyle=solid,fillcolor=white]{C1}{\mbox{\small 4}}}

\rput(2.5,1.5){\circlenode[linestyle=none,fillstyle=solid,fillcolor=white]{C1}{\mbox{\small 3}}}
\rput(0.75,4.25){\circlenode[linestyle=none,fillstyle=solid,fillcolor=white]{C1}{\mbox{\small 2}}}
\rput(4.25,4.25){\circlenode[linestyle=none,fillstyle=solid,fillcolor=white]{C1}{\mbox{\small 3}}}
\rput(2.5,3.5){\circlenode[linestyle=none,fillstyle=solid,fillcolor=white]{C1}{\mbox{\small 1}}}
\rput(3.5,2.5){\circlenode[linestyle=none,fillstyle=solid,fillcolor=white]{C1}{\mbox{\small 2}}}

\end{pspicture}}

\pscurve(0.5,5.5)(4.5,6)(8.5,5.5)
\pscurve(5.5,5.5)(9,6)(13.5,5.5)
\pscurve(0.5,0.5)(4.5,0)(8.5,0.5)
\pscurve(5.5,0.5)(9,0)(13.5,0.5)

\pscurve(2,4)(6,3.5)(10,4)
\pscurve(4,4)(8,4.5)(12,4)
\pscurve(2,2)(6,2.5)(10,2)
\pscurve(4,2)(8,1.5)(12,2)

\rput(4.5,0){\circlenode[linestyle=none,fillstyle=solid,fillcolor=white]{C1}{\mbox{\small 5}}}
\rput(9,0){\circlenode[linestyle=none,fillstyle=solid,fillcolor=white]{C1}{\mbox{\small 5}}}
\rput(8,1.5){\circlenode[linestyle=none,fillstyle=solid,fillcolor=white]{C1}{\mbox{\small 5}}}  
\rput(4.5,6){\circlenode[linestyle=none,fillstyle=solid,fillcolor=white]{C1}{\mbox{\small 5}}}
\rput(9,6){\circlenode[linestyle=none,fillstyle=solid,fillcolor=white]{C1}{\mbox{\small 5}}}
\rput(6,3.5){\circlenode[linestyle=none,fillstyle=solid,fillcolor=white]{C1}{\mbox{\small 5}}}  
\rput(8,4.5){\circlenode[linestyle=none,fillstyle=solid,fillcolor=white]{C1}{\mbox{\small 5}}}
\rput(6,2.5){\circlenode[linestyle=none,fillstyle=solid,fillcolor=white]{C1}{\mbox{\small 5}}}

\end{pspicture}
\end{center}

\caption {Proper edge coloring of $H_4$ distinguishing all vertices} 
\label{H_4}
\end{figure}

\subsection{Cases for $n\geq 5$}

Finally, we show that for hypercubes of dimension at least five, the natural lower bound of five colors is sufficient.

\begin{thm} \label{thm:proper_n}
The smallest number of colors in a proper coloring needed to distinguish the vertices of a hypercube $H_n$, where $n\geq 5$, is $n$. 
\end{thm}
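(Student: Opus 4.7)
The lower bound of $n$ colors follows from $\chi'(H_n)=n$, so any proper edge coloring needs at least $n$ colors. For the upper bound, my plan is to prove by induction on $n\geq 5$ a \emph{strengthened} claim: there exist two proper distinguishing $n$-edge-colorings $f_n,g_n$ of $H_n$ whose palette sets $P_n,Q_n\subset S_n$ satisfy $P_n\cap Q_n=\emptyset$. The strengthening is what makes the induction go through, because the ``disjoint companion'' at level $n+1$ can then be produced for free.

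\textbf{Inductive step ($n\to n+1$, $n\geq 5$).} Decompose $H_{n+1}$ as two copies $A,B$ of $H_n$ joined by a perfect matching of dim-$(n+1)$ edges. Color $A$ using $f_n$, color $B$ using $g_n$, and color every matching edge with color $n+1$. Because $f_n$ and $g_n$ are proper $n$-edge-colorings of an $n$-regular graph, every vertex of $A$ and $B$ already sees all of $\{1,\ldots,n\}$, so color $n+1$ is forced on the matching and the combined coloring is proper. The palette at $a\in A$ is $(F_n(a),n+1)$ with $F_n(a)\in P_n$, and at $b\in B$ it is $(G_n(b),n+1)$ with $G_n(b)\in Q_n$; since $P_n\cap Q_n=\emptyset$, all $2^{n+1}$ palettes are distinct. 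Call this coloring $f_{n+1}$ with palette set $P_{n+1}$. To obtain the companion $g_{n+1}$, pick any $\tau\in S_{n+1}$ with $\tau(n+1)\neq n+1$ (for instance $\tau=(n,\,n+1)$) and set $g_{n+1}=\tau\circ f_{n+1}$. Every palette of $g_{n+1}$ then has last coordinate $\tau(n+1)\neq n+1$, while every palette of $f_{n+1}$ has last coordinate $n+1$, so $P_{n+1}\cap Q_{n+1}=\emptyset$ automatically.

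\textbf{Base case ($n=5$).} Exhibit explicitly a pair $(f_5,g_5)$ of proper distinguishing $5$-edge-colorings of $H_5$ whose palette sets are disjoint subsets of $S_5$. Since each palette set has size $32$ and $|S_5|=120$, there is ample room numerically. A natural route is to apply the same doubling idea one level lower: start from the proper distinguishing $5$-coloring of $H_4$ guaranteed by the preceding observation (Figure~\ref{H_4}), and combine two suitable color-permuted copies of it on the two $H_4$-layers inside $H_5$, assigning each dim-$5$ matching edge the color missing at its endpoints. Disjointness of the two resulting palette sets is arranged by choosing the color permutation so that the induced palettes differ in at least one coordinate; if the abstract argument does not close cleanly, it is verified by a direct case check on the $32$ vertices.

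\textbf{Main obstacle.} Once the strengthened hypothesis is in place, the inductive step is nearly mechanical: the key trick is that inserting color $n+1$ on the matching pins the last coordinate of every palette, so a single transposition $\tau=(n,n+1)$ suffices to manufacture the companion $g_{n+1}$ at the next level. The principal delicacy lies in the base case, because the ``companion coloring for free'' mechanism is not available at the start of the induction; producing two concretely disjoint distinguishing $5$-colorings of $H_5$, and checking properness plus distinguishing for all $64$ palettes, is the laborious step I expect to consume most of the work.
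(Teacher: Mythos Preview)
Your inductive step is correct and, once unwound, essentially coincides with the paper's: because your $f_{n+1}$ always has last coordinate $n{+}1$, the companion $g_{n+1}=(n,n{+}1)\circ f_{n+1}$ is determined by $f_{n+1}$ alone, so your ``two disjoint-palette colorings'' invariant reduces to the simpler one the paper actually carries---namely that for $n\ge 6$ every dim-$n$ edge receives color $n$. The paper's step then colors one half with $f_{n-1}$ and the other with the $(n{-}2,n{-}1)$-swap of $f_{n-1}$, which is your $g_{n-1}$ up to relabelling.

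The difference---and the place where your plan is not yet closed---is the base case. In the $5$-coloring of $H_4$ from Figure~\ref{H_4}, every dim-$4$ edge is colored $5$, so each vertex is missing exactly one color $m_v\in\{1,2,3,4\}$. If you place a color-permuted copy $\sigma\circ f_4$ on the second layer, the twin $v'$ is missing $\sigma(m_v)$, and the phrase ``the color missing at its endpoints'' names a single color only when $\sigma(m_v)=m_v$; otherwise no color is absent from both endpoints and the dim-$5$ edge $vv'$ cannot be properly colored at all. Since several colors in $\{1,2,3,4\}$ occur as missing colors, $\sigma$ is forced to fix them, leaving essentially no freedom (and $\sigma=\mathrm{id}$ collapses the two layers). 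So the doubling-from-$H_4$ route does not go through as stated, and your own forecast of a direct check on $32$ vertices becomes the actual content of the base case. The paper handles this more cleanly: it exhibits one explicit distinguishing $5$-coloring $f_5$ of $H_5$ with the extra feature that color $1$ appears only in dimensions $3$ and $5$; on the second $H_5$-copy inside $H_6$ it applies a \emph{dimension} permutation (the cyclic coordinate shift) rather than a color permutation, so there color $1$ lies only in dimensions $1$ and $4$. The position of color $1$ then separates the two halves, all dim-$6$ edges get the new color $6$, and the simple induction is launched immediately---no disjoint companion for $H_5$ is ever needed.
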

\begin{proof}
Let $f_n: E \to \{1,\dots,k\}$ be the proper edge coloring of the hypercube $H_n$ with the smallest possible numbers of colors, such that all vertices are distinguished. Since $f_n$ is a proper coloring and each vertex is incident to $n$ edges, the number of colors in the coloring $f_n$ is at least $n$. We show that there exists a desired coloring with $n$ colors.

The coloring $f_5$ of the hypercube $H_5$ with five colors is shown in Figure \ref{fig5}. Below are all palletes of the vertices. 

\begin{center}
45321 $\;$ 35421 $\;$ 54321 $\;$ 53421\newline
43521 $\;$ 35124 $\;$ 54231 $\;$ 52134\newline
24351 $\;$ 23451 $\;$ 42351 $\;$ 32451\newline
24531 $\;$ 25134 $\;$ 43251 $\;$ 32154\newline
54123 $\;$ 43125 $\;$ 52143 $\;$ 32145\newline
54132 $\;$ 45231 $\;$ 53142 $\;$ 32541\newline
25143 $\;$ 35142 $\;$ 24153 $\;$ 43152\newline
23145 $\;$ 35241 $\;$ 24135 $\;$ 42531\newline
\end{center}

Note that the edges of color 1 appear only in two dimensions (the 3rd and the 5th) and every vertex has an edge of color 1 incident to it.

For hypercubes $H_n$, $n\geq 6$, we will show inductively on $n$ that there is a coloring $f_n$ such that in one dimension (let us say the $n$th dimension) all edges have the same new color. 

To obtain the coloring $f_6$ of the hypercube $H_6$, we consider the hypercube $H_6$ as two five-dimensional hypercubes, $H_5$ and $H_5'$, with a matching connecting the corresponding vertices. Let us color the edges of the hypercube $H_5$ as shown in Figure \ref{fig5}. The coloring of the edges of $H'_5$ is obtained from the coloring of $H_5$ by applying the permutation $(1,2,3,4,5)$ to the dimensions. Consider an edge $(x_1,x_2,x_3,x_4,x_5)(y_1,y_2,y_3,y_4,y_5)$ of $H_5$, colored with $k \in \{1,2,3,4,5\}$. Then in $H_5'$, the edge $(x_5,x_1,x_2,x_3,x_4)(y_5,y_1,y_2,y_3,y_4)$ is colored with $k$. The resulting coloring of $H_5'$ has the property that the edges of color 1 appear only in dimensions 4 and 1. 

We color the remaining edges (of the 6th dimension) with color $6$. It is immediate that the coloring $f_6$ is a proper edge coloring. Moreover, if we consider a vertex from $H_5$ and a vertex from $H_5'$, their palettes are different. The vertex from $H_5$ has color 1 in the 3rd or the 5th position in the sequence, and the vertex in $H_5'$ has color 1 in the 4th or the 1st position. Hence, the coloring $f_6$ distinguishes all vertices of the graph $H_6$. This establishes the base case.

Suppose that $n\geq 7$ and there is a coloring $f_{n-1}$ of the hypercube $H_{n-1}$ such that in $(n-1)$st dimension all edges have the same new color (say $n-1$). Again, we will consider the graph $H_n$ as two hypercubes $H_{n-1}$ and $H_{n-1}'$ with a matching added. Now we define the coloring $f_n$ as follows: let us color the edges of the hypercubes $H_{n-1}$ and $H_{n-1}'$ as in the coloring $f_{n-1}$, and next interchange colors $n-1$ and $n-2$ in the hypercube $H_{n-1}'$. We color the remaining edges with the new color $n$. The claim is a consequence of the inductive hypothesis.
\end{proof}

\begin{figure} 
\psset{unit=0.75cm}
\psset{radius=0.2}

\begin{center}

\begin{pspicture}(13,10)

\put(0.5,0.5)
{\begin{pspicture}(0,0)
\dotnode(0,0){X1} 
\dotnode(5,0){X2} 
\dotnode(5,5){X3}
\dotnode(0,5){X4}
\ncline{X1}{X2}
\ncline{X2}{X3}
\ncline{X3}{X4}
\ncline{X4}{X1}

\dotnode(1.5,1.5){Y1} 
\dotnode(3.5,1.5){Y2} 
\dotnode(3.5,3.5){Y3}
\dotnode(1.5,3.5){Y4}
\ncline{Y1}{Y2}
\ncline{Y2}{Y3}
\ncline{Y3}{Y4}
\ncline{Y4}{Y1}

\ncline{X1}{Y1}
\ncline{X2}{Y2}
\ncline{X3}{Y3}
\ncline{X4}{Y4}


\rput(2.5,0){\circlenode[linestyle=none,fillstyle=solid,fillcolor=white]{C1}{\mbox{\small 2}}}
\rput(2.5,5){\circlenode[linestyle=none,fillstyle=solid,fillcolor=white]{C1}{\mbox{\small 5}}}
\rput(0.75,0.75){\circlenode[linestyle=none,fillstyle=solid,fillcolor=white]{C1}{\mbox{\small 1}}}
\rput(4.25,0.75){\circlenode[linestyle=none,fillstyle=solid,fillcolor=white]{C1}{\mbox{\small 1}}}
\rput(5,2.5){\circlenode[linestyle=none,fillstyle=solid,fillcolor=white]{C1}{\mbox{\small 4}}}
\rput(0,2.5){\circlenode[linestyle=none,fillstyle=solid,fillcolor=white]{C1}{\mbox{\small 4}}}
\rput(1.5,2.5){\circlenode[linestyle=none,fillstyle=solid,fillcolor=white]{C1}{\mbox{\small 2}}}

\rput(2.5,1.5){\circlenode[linestyle=none,fillstyle=solid,fillcolor=white]{C1}{\mbox{\small 5}}}
\rput(0.75,4.25){\circlenode[linestyle=none,fillstyle=solid,fillcolor=white]{C1}{\mbox{\small 3}}}
\rput(4.25,4.25){\circlenode[linestyle=none,fillstyle=solid,fillcolor=white]{C1}{\mbox{\small 2}}}
\rput(2.5,3.5){\circlenode[linestyle=none,fillstyle=solid,fillcolor=white]{C1}{\mbox{\small 4}}}
\rput(3.5,2.5){\circlenode[linestyle=none,fillstyle=solid,fillcolor=white]{C1}{\mbox{\small 3}}}

\end{pspicture}}

\put(8.5,0.5)
{\begin{pspicture}(0,0)
\dotnode(0,0){X1} 
\dotnode(5,0){X2} 
\dotnode(5,5){X3}
\dotnode(0,5){X4}
\ncline{X1}{X2}
\ncline{X2}{X3}
\ncline{X3}{X4}
\ncline{X4}{X1}

\dotnode(1.5,1.5){Y1} 
\dotnode(3.5,1.5){Y2} 
\dotnode(3.5,3.5){Y3}
\dotnode(1.5,3.5){Y4}
\ncline{Y1}{Y2}
\ncline{Y2}{Y3}
\ncline{Y3}{Y4}
\ncline{Y4}{Y1}

\ncline{X1}{Y1}
\ncline{X2}{Y2}
\ncline{X3}{Y3}
\ncline{X4}{Y4}


\rput(2.5,0){\circlenode[linestyle=none,fillstyle=solid,fillcolor=white]{C1}{\mbox{\small 4}}}
\rput(2.5,5){\circlenode[linestyle=none,fillstyle=solid,fillcolor=white]{C1}{\mbox{\small 5}}}
\rput(0.75,0.75){\circlenode[linestyle=none,fillstyle=solid,fillcolor=white]{C1}{\mbox{\small 1}}}
\rput(4.25,0.75){\circlenode[linestyle=none,fillstyle=solid,fillcolor=white]{C1}{\mbox{\small 5}}}
\rput(5,2.5){\circlenode[linestyle=none,fillstyle=solid,fillcolor=white]{C1}{\mbox{\small 2}}}
\rput(0,2.5){\circlenode[linestyle=none,fillstyle=solid,fillcolor=white]{C1}{\mbox{\small 3}}}
\rput(1.5,2.5){\circlenode[linestyle=none,fillstyle=solid,fillcolor=white]{C1}{\mbox{\small 2}}}

\rput(2.5,1.5){\circlenode[linestyle=none,fillstyle=solid,fillcolor=white]{C1}{\mbox{\small 3}}}
\rput(0.75,4.25){\circlenode[linestyle=none,fillstyle=solid,fillcolor=white]{C1}{\mbox{\small 4}}}
\rput(4.25,4.25){\circlenode[linestyle=none,fillstyle=solid,fillcolor=white]{C1}{\mbox{\small 1}}}
\rput(2.5,3.5){\circlenode[linestyle=none,fillstyle=solid,fillcolor=white]{C1}{\mbox{\small 3}}}
\rput(3.5,2.5){\circlenode[linestyle=none,fillstyle=solid,fillcolor=white]{C1}{\mbox{\small 2}}}

\end{pspicture}}
\pscurve(0.5,5.5)(4.5,6)(8.5,5.5)
\pscurve(5.5,5.5)(9,6)(13.5,5.5)
\pscurve(0.5,0.5)(4.5,0)(8.5,0.5)
\pscurve(5.5,0.5)(9,0)(13.5,0.5)

\pscurve(2,4)(6,3.5)(10,4)
\pscurve(4,4)(8,4.5)(12,4)
\pscurve(2,2)(6,2.5)(10,2)
\pscurve(4,2)(8,1.5)(12,2)

\rput(5,0){\circlenode[linestyle=none, fillstyle=solid,fillcolor=white]{C1}{\mbox{\small 5}}}
\rput(9,0){\circlenode[linestyle=none,fillstyle=solid,fillcolor=white]{C1}{\mbox{\small 3}}}
\rput(7.5,1.5){\circlenode[linestyle=none,fillstyle=solid,fillcolor=white]{C1}{\mbox{\small 4}}}  
\rput(5,6){\circlenode[linestyle=none,fillstyle=solid,fillcolor=white]{C1}{\mbox{\small 2}}}
\rput(9,6){\circlenode[linestyle=none,fillstyle=solid,fillcolor=white]{C1}{\mbox{\small 3}}}
\rput(6.5,3.5){\circlenode[linestyle=none,fillstyle=solid,fillcolor=white]{C1}{\mbox{\small 5}}} 
\rput(7.5,4.5){\circlenode[linestyle=none,fillstyle=solid,fillcolor=white]{C1}{\mbox{\small 5}}}
\rput(6.5,2.5){\circlenode[linestyle=none,fillstyle=solid,fillcolor=white]{C1}{\mbox{\small 4}}}


\put(0.5, 8.5)
{\begin{pspicture}(0,0)
\dotnode(0,0){X1} 
\dotnode(5,0){X2} 
\dotnode(5,5){X3}
\dotnode(0,5){X4}
\ncline{X1}{X2}
\ncline{X2}{X3}
\ncline{X3}{X4}
\ncline{X4}{X1}

\dotnode(1.5,1.5){Y1} 
\dotnode(3.5,1.5){Y2} 
\dotnode(3.5,3.5){Y3}
\dotnode(1.5,3.5){Y4}
\ncline{Y1}{Y2}
\ncline{Y2}{Y3}
\ncline{Y3}{Y4}
\ncline{Y4}{Y1}

\ncline{X1}{Y1}
\ncline{X2}{Y2}
\ncline{X3}{Y3}
\ncline{X4}{Y4}

\rput(2.5,0){\circlenode[linestyle=none,fillstyle=solid,fillcolor=white]{C1}{\mbox{\small 2}}}
\rput(2.5,5){\circlenode[linestyle=none,fillstyle=solid,fillcolor=white]{C1}{\mbox{\small 4}}}
\rput(0.75,0.75){\circlenode[linestyle=none,fillstyle=solid,fillcolor=white]{C1}{\mbox{\small 1}}}
\rput(4.25,0.75){\circlenode[linestyle=none,fillstyle=solid,fillcolor=white]{C1}{\mbox{\small 1}}}
\rput(5,2.5){\circlenode[linestyle=none,fillstyle=solid,fillcolor=white]{C1}{\mbox{\small 3}}}
\rput(0,2.5){\circlenode[linestyle=none,fillstyle=solid,fillcolor=white]{C1}{\mbox{\small 5}}}
\rput(1.5,2.5){\circlenode[linestyle=none,fillstyle=solid,fillcolor=white]{C1}{\mbox{\small 4}}}

\rput(2.5,1.5){\circlenode[linestyle=none,fillstyle=solid,fillcolor=white]{C1}{\mbox{\small 5}}}
\rput(0.75,4.25){\circlenode[linestyle=none,fillstyle=solid,fillcolor=white]{C1}{\mbox{\small 3}}}
\rput(4.25,4.25){\circlenode[linestyle=none,fillstyle=solid,fillcolor=white]{C1}{\mbox{\small 5}}}
\rput(2.5,3.5){\circlenode[linestyle=none,fillstyle=solid,fillcolor=white]{C1}{\mbox{\small 2}}}
\rput(3.5,2.5){\circlenode[linestyle=none,fillstyle=solid,fillcolor=white]{C1}{\mbox{\small 4}}}

\end{pspicture}}

\put(8.5,8.5)
{\begin{pspicture}(0,0)
\dotnode(0,0){X1} 
\dotnode(5,0){X2} 
\dotnode(5,5){X3}
\dotnode(0,5){X4}
\ncline{X1}{X2}
\ncline{X2}{X3}
\ncline{X3}{X4}
\ncline{X4}{X1}

\dotnode(1.5,1.5){Y1} 
\dotnode(3.5,1.5){Y2} 
\dotnode(3.5,3.5){Y3}
\dotnode(1.5,3.5){Y4}
\ncline{Y1}{Y2}
\ncline{Y2}{Y3}
\ncline{Y3}{Y4}
\ncline{Y4}{Y1}

\ncline{X1}{Y1}
\ncline{X2}{Y2}
\ncline{X3}{Y3}
\ncline{X4}{Y4}

\rput(2.5,0){\circlenode[linestyle=none,fillstyle=solid,fillcolor=white]{C1}{\mbox{\small 3}}}
\rput(4.25,0.75){\circlenode[linestyle=none,fillstyle=solid,fillcolor=white]{C1}{\mbox{\small 2}}}
\rput(5,2.5){\circlenode[linestyle=none,fillstyle=solid,fillcolor=white]{C1}{\mbox{\small 5}}}
\rput(2.5,5){\circlenode[linestyle=none,fillstyle=solid,fillcolor=white]{C1}{\mbox{\small 3}}}
\rput(0.75,0.75){\circlenode[linestyle=none,fillstyle=solid,fillcolor=white]{C1}{\mbox{\small 1}}}
\rput(0,2.5){\circlenode[linestyle=none,fillstyle=solid,fillcolor=white]{C1}{\mbox{\small 5}}}
\rput(1.5,2.5){\circlenode[linestyle=none,fillstyle=solid,fillcolor=white]{C1}{\mbox{\small 3}}}
\rput(2.5,1.5){\circlenode[linestyle=none,fillstyle=solid,fillcolor=white]{C1}{\mbox{\small 4}}}
\rput(0.75,4.25){\circlenode[linestyle=none,fillstyle=solid,fillcolor=white]{C1}{\mbox{\small 4}}}
\rput(4.25,4.25){\circlenode[linestyle=none,fillstyle=solid,fillcolor=white]{C1}{\mbox{\small 1}}}
\rput(2.5,3.5){\circlenode[linestyle=none,fillstyle=solid,fillcolor=white]{C1}{\mbox{\small 2}}}
\rput(3.5,2.5){\circlenode[linestyle=none,fillstyle=solid,fillcolor=white]{C1}{\mbox{\small 5}}}

\end{pspicture}}
\pscurve(0.5,13.5)(4.5,14)(8.5,13.5)
\pscurve(5.5,13.5)(9,14)(13.5,13.5)
\pscurve(0.5,8.5)(4.5,8)(8.5,8.5)
\pscurve(5.5,8.5)(9,8)(13.5,8.5)

\pscurve(2,12)(6,11.5)(10,12)
\pscurve(4,12)(8,12.5)(12,12)
\pscurve(2,10)(6,10.5)(10,10)
\pscurve(4,10)(8,9.5)(12,10)

\rput(5,14){\circlenode[linestyle=none,fillstyle=solid,fillcolor=white]{C1}{\mbox{\small 2}}}
\rput(9,14){\circlenode[linestyle=none,fillstyle=solid,fillcolor=white]{C1}{\mbox{\small 2}}}
\rput(5,8){\circlenode[linestyle=none,fillstyle=solid,fillcolor=white]{C1}{\mbox{\small 4}}}  
\rput(9,8){\circlenode[linestyle=none,fillstyle=solid,fillcolor=white]{C1}{\mbox{\small 4}}}
\rput(6.5,11.5){\circlenode[linestyle=none,fillstyle=solid,fillcolor=white]{C1}{\mbox{\small 5}}}
\rput(7.5,12.5){\circlenode[linestyle=none,fillstyle=solid,fillcolor=white]{C1}{\mbox{\small 3}}} 
\rput(6.5,10.5){\circlenode[linestyle=none,fillstyle=solid,fillcolor=white]{C1}{\mbox{\small 2}}}
\rput(7.5,9.5){\circlenode[linestyle=none,fillstyle=solid,fillcolor=white]{C1}{\mbox{\small 3}}}

\pscurve(0.5,13.5)(0,9)(0.5,5.5)
\pscurve(5.5,13.5)(6,9)(5.5,5.5)
\pscurve(0.5,8.5)(0,5)(0.5,0.5)
\pscurve(5.5,8.5)(6,5)(5.5,0.5)

\pscurve(2,12)(1.5,7.5)(2,4)
\pscurve(4,12)(3.5,7.5)(4,4)
\pscurve(2,10)(2.5,5.5)(2,2)
\pscurve(4,10)(4.5,5.5)(4,2)

\pscurve(8.5,13.5)(8,9)(8.5,5.5)
\pscurve(13.5,13.5)(14,9)(13.5,5.5)
\pscurve(8.5,8.5)(8,5)(8.5,0.5)
\pscurve(13.5,8.5)(14,5)(13.5,0.5)

\pscurve(10,12)(9.5,7.5)(10,4)
\pscurve(12,12)(11.5,7.5)(12,4)
\pscurve(10,10)(10.5,5.5)(10,2)
\pscurve(12,10)(12.5,5.5)(12,2)

\rput(0,9){\circlenode[linestyle=none,fillstyle=solid,fillcolor=white]{C1}{\mbox{\small 1}}}
\rput(6,9){\circlenode[linestyle=none,fillstyle=solid,fillcolor=white]{C1}{\mbox{\small 1}}}
\rput(0,5){\circlenode[linestyle=none,fillstyle=solid,fillcolor=white]{C1}{\mbox{\small 3}}}  
\rput(6,5){\circlenode[linestyle=none,fillstyle=solid,fillcolor=white]{C1}{\mbox{\small 5}}}
\rput(1.5,7.5){\circlenode[linestyle=none,fillstyle=solid,fillcolor=white]{C1}{\mbox{\small 1}}}
\rput(3.5,7.5){\circlenode[linestyle=none,fillstyle=solid,fillcolor=white]{C1}{\mbox{\small 1}}} 
\rput(2.5,7){\circlenode[linestyle=none,fillstyle=solid,fillcolor=white]{C1}{\mbox{\small 3}}}
\rput(4.5,7){\circlenode[linestyle=none,fillstyle=solid,fillcolor=white]{C1}{\mbox{\small 2}}}
\rput(8,9){\circlenode[linestyle=none,fillstyle=solid,fillcolor=white]{C1}{\mbox{\small 1}}}
\rput(14,9){\circlenode[linestyle=none,fillstyle=solid,fillcolor=white]{C1}{\mbox{\small 4}}}
\rput(8,5){\circlenode[linestyle=none,fillstyle=solid,fillcolor=white]{C1}{\mbox{\small 2}}}  
\rput(14,5){\circlenode[linestyle=none,fillstyle=solid,fillcolor=white]{C1}{\mbox{\small 1}}}
\rput(9.5,7.5){\circlenode[linestyle=none,fillstyle=solid,fillcolor=white]{C1}{\mbox{\small 1}}}
\rput(11.5,7.5){\circlenode[linestyle=none,fillstyle=solid,fillcolor=white]{C1}{\mbox{\small 4}}} 
\rput(10.5,7){\circlenode[linestyle=none,fillstyle=solid,fillcolor=white]{C1}{\mbox{\small 5}}}
\rput(12.5,7){\circlenode[linestyle=none,fillstyle=solid,fillcolor=white]{C1}{\mbox{\small 1}}}

\end{pspicture}
\end{center}

\caption {Proper 5-edge coloring of $H_5$ distinguishing all vertices}
\label{fig5} 

\end{figure}
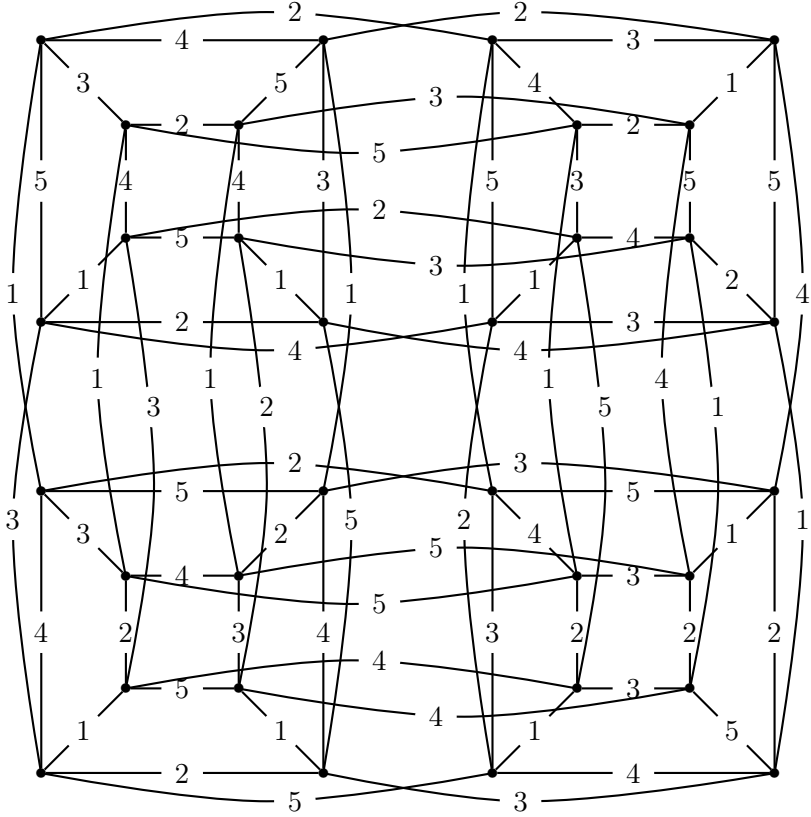
\section{Final remarks}

As we mentioned in the introduction, the idea of considering sequences in the context of distinguishing vertices of a graph appeared in \cite{SeSt13}.
The main focus of \cite{SeSt13} is the coloring that distinguishes adjacent vertices. However, authors also consider distinguishing all vertices. We note that the bounds given in Theorem \ref{thm:general_2} and Theorem \ref{thm:proper_n} are best possible when we only require for adjacent vertices to receive different sequences.

In their approach, the ordering of the edges incident with a given vertex always follows from the ordering of the entire edge set. Since in their paper the ordering of the edges can be introduced arbitrarily, they consider two parameters.
The first of them, called {\it general sequence irregularity strength}, is the smallest number $k$ of colors such that for every ordering of $E(G)$ there exists an edge coloring of $G$ with $k$ colors, resulting in every vertex receiving a distinct sequence of colors.
The second one, called {\it specific sequence irregularity strength}, is defined similarly with the 
 requirement that {\bf there exists} an ordering of the edge set for which there is a coloring distinguishing all vertices by sequences.

Denoting by $n_i$ the number of vertices of degree $i$ in a graph $G$, and by $k$ the number of colors in an edge coloring distinguishing all vertices by sequences, it is immediate that $k^{i} \ge n_i$  for $1 \le  i \le \Delta (G)$. 
Now let $M_G =\max \{ \left\lceil \sqrt[i]{n_i} \right\rceil \colon 1 \le  i \le \Delta (G)\}$. The authors in \cite{SeSt13} conjectured that general sequence irregularity strength of $G$ is always equal to $M_G$.
However, such a generally formulated conjecture is not true. Indeed, let us notice that for the graph $H_2$ and the ordering
$e_1 = ({00, 01}), e_2 = ({00, 10}), e_3 = ({10, 11}), e_4 = ({01, 11})$ there does not exist an edge coloring with two colors such that every vertex receives a distinct sequence.

However, the following, slightly less general conjecture may be true.

\begin{conj} 
If $G$ is a graph without a component isomorphic to $K_2$, then specific sequence irregularity strength of $G$ is equal to $M_G$.
\end{conj}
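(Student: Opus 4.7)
The lower bound $k \ge M_G$ is already established in the paragraph preceding the conjecture, via the counting $k^i \ge n_i$ for each degree $i$. The task is therefore to prove the upper bound: for every $G$ without a $K_2$ component, exhibit an ordering of $E(G)$ and a coloring $f: E \to \{1, \dots, M_G\}$ that yields distinct palettes at all vertices.

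The plan is to split the construction into two stages. First, for each degree $i$ present in $G$, fix an injection $\sigma^{(i)}: V_i \to \{1, \dots, M_G\}^i$; this is possible because $n_i \le M_G^i$ by the definition of $M_G$. The sequence $\sigma_v := \sigma^{(d(v))}(v)$ will be the \emph{target palette} at $v$. Second, realize these targets via a single edge coloring and a suitable global ordering of $E$. A global ordering induces, at each vertex $v$, a bijection $p_v: E(v) \to \{1, \dots, d(v)\}$; conversely, a family $\{p_v\}_{v \in V}$ arises from some global ordering if and only if the relation ``$e \prec e'$ whenever some $p_v$ ranks $e$ before $e'$'' is acyclic on $E$. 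The task thus reduces to choosing $\{p_v\}$ satisfying (i) $\sigma_u[p_u(e)] = \sigma_v[p_v(e)]$ for every edge $e = uv$ (this common value being the color $f(e)$), and (ii) global consistency in the sense above.

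The main obstacle is the coupling at each edge: the two endpoints of $uv$ must agree on both the color of $uv$ and the positions to which they assign it, and the resulting local orders must fit into one acyclic relation on $E$. To overcome this, I would exploit the slack $M_G^i - n_i$ of unused sequences: whenever an initial target assignment causes a conflict on some edge, swap one endpoint's target for a spare sequence in the same degree class and repair locally. The hypothesis of no $K_2$ component is critical precisely here --- a $K_2$ component would force its two degree-$1$ endpoints to share the unique edge, so their length-$1$ palettes would coincide with the color of that edge, making distinction impossible; with this degenerate case excluded, every degree-$1$ vertex has a neighbor of larger degree, affording the flexibility to reassign. The hardest step I anticipate is showing that the swapping procedure terminates with a globally realizable assignment for every admissible graph --- plausibly via a discharging argument that bounds the propagation of conflicts along edges, or by casting the constraints as a polynomial system amenable to Alon's combinatorial Nullstellensatz, using the non-vanishing of a suitable monomial to locate a valid coloring.
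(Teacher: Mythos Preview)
The statement you are attempting to prove is presented in the paper as a \emph{Conjecture}, not a theorem. The paper does not prove it; it explicitly offers the statement as something that ``may be true,'' and cites Theorem~\ref{thm:general_2} only as supporting evidence in the special case of hypercubes. There is therefore no proof in the paper to compare your attempt against.

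As for the proposal itself: it is a research outline, not a proof. The target-palette framework is a reasonable way to organize the problem, but the core difficulty you identify --- reconciling the target sequences at the two ends of every edge while simultaneously producing local orders $\{p_v\}$ that extend to a single acyclic global ordering --- is precisely the content of the conjecture, and you do not resolve it. The ``swapping'' step is not a procedure: you do not specify which spare sequence to swap to, why a spare exists (the slack $M_G^i - n_i$ may be zero, as it is for the $n$-regular hypercube $H_n$, where $n_n = 2^n = M_G^n$), why repairing one edge does not create new conflicts elsewhere, or why the process terminates with a globally realizable family. Your final sentence concedes this by deferring to an unspecified discharging argument or an unspecified Nullstellensatz application. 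In short, the proposal restates the obstacle rather than removing it; the conjecture remains open.
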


In our approach, we consider a natural ordering of the edges incident with a given vertex. This differs from the approach considered in \cite{SeSt13}, as we consider a local ordering opposed to the global ordering of the edges. However, we note that the ordering of the edges that we consider can also be thought of as coming from a general ordering of the edges. Just number the edges of dimension $1$ with numbers from $1$ to $n$, the edges of dimension $2$ with numbers from $n+1$ to $2n$, and so on. Therefore, Theorem 1 provides support for the above conjecture, confirming it for the hypercubes.

As far as we know, the sequence variant of vertex distinguishing has not been considered so far in the case of proper edge colorings.


\end{document}